\numberwithin{equation}{section}
\newtheorem{thm}{Theorem}[section]
\newtheorem{cor}[thm]{Corollary}
\newtheorem{lem}[thm]{Lemma}
\newtheorem{prop}[thm]{Proposition}
\newtheorem{defn}[thm]{Definition}
\newtheorem{rem}[thm]{Remark}
\DeclareMathOperator{\Tor}{Tor}
\DeclareMathOperator{\Ext}{Ext}
\DeclareMathOperator{\Supp}{Supp}
\DeclareMathOperator{\V}{V}
\DeclareMathOperator{\Hom}{Hom}
\DeclareMathOperator{\Ker}{Ker}
\DeclareMathOperator{\Coker}{Coker}
\DeclareMathOperator{\Image}{Im}
\DeclareMathOperator{\cd}{cd}
\DeclareMathOperator{\q}{q}
\DeclareMathOperator{\Ass}{Ass}
\DeclareMathOperator{\Min}{Min}
\DeclareMathOperator{\lc}{H}
\DeclareMathOperator{\Spec}{Spec}
\DeclareMathOperator{\G}{\Gamma}
\DeclareMathOperator{\pd}{pd}
\newcommand{\lo}{\longrightarrow}
\newcommand{\fa}{\mathfrak{a}}
\newcommand{\fb}{\mathfrak{b}}
\newcommand{\fp}{\mathfrak{p}}
\newcommand{\fq}{\mathfrak{q}}
\begin{document}

\title[Upper bounds of generalized local cohomology modules]
 {Upper bounds for finiteness of generalized local cohomology modules}

\bibliographystyle{amsplain}

    \author[M. Aghapournahr]{M. Aghapournahr}
\address{ Department of Mathematic, Faculty of Science,  Arak University, Arak, 38156-8-8349, Iran.}
\email{m-aghapour@araku.ac.ir}


\keywords{Generalized local cohomology module, Serre subcategory,
cohomological dimension.}

\subjclass[2000]{13D45, 13D07}


\begin{abstract}
Let $R$ be a commutative Noetherian ring with non-zero identity and
$\fa$ an ideal of $R$. Let $M$ be a finite $R$--module of
of finite projective dimension and $N$ an arbitrary finite $R$--module.
We characterize the membership of the generalized local cohomology modules
$\lc^{i}_{\fa}(M,N)$ in certain Serre
subcategories of the category of modules from upper bounds. We define and study
the properties of a generalization of cohomological dimension of generalized local
cohomology  modules. Let $\mathcal S$ be a Serre subcategory of the category
of $R$--modules and $n \geqslant \pd M$ be an integer such that $\lc^{i}_{\fa}(M,N)$
 belongs to $\mathcal S$ for all $i> n$. If $\fb$ is an ideal of $R$
 such that $\lc^{n}_{\fa}(M,N/{\fb}N)$ belongs to $\mathcal S$, It is also
 shown that  the module
  $\lc^{n}_{\fa}(M,N)/{\fb}\lc^{n}_{\fa}(M,N)$ belongs to $\mathcal S$.
  \end{abstract}

\maketitle

\section{Introduction}
Throughout this paper $R$ is a commutative noetherian ring. Let  $\fa$ be
an ideal of $R$, $M$ be a finite $R$--module of
of finite projective dimension and $N$ an arbitrary finite $R$--module. The
notion of generalized local
cohomology was introduced by J. Herzog \cite{He}. The $i$--th generalized
local cohomology modules of $M$ and $N$ with respectt to $\fa$ is defined
 by
\begin{center}
$\lc^{i}_{\fa}(M,N) \cong \underset{n}\varinjlim
\Ext^{i}_{R}(M/{\fa}^{n}M,N).$
\end{center}

 It is clear that
$\lc^{i}_{\fa}(R,N)$ is just the the ordinary local cohomology module $\lc^{i}_{\fa}(N)$.
This concept was studied in the articles \cite{S}, \cite{BZ} and
\cite{Ya}.

For ordinary local cohomology module there is the important concept {\it{cohomological dimension}}
 of an $R$--module $N$ with respect to an ideal $\fa$ of $R$. It is denoted by
\begin{center}
$\cd_{\fa}(N)= \sup\{i\geqslant 0 | \lc^{i}_{\fa}(N)\neq 0\}$
\end{center}
This notion has been studied by several authors; see, for example \cite{F}, \cite{Ha}, \cite{O}, \cite{HL} and
\cite{DNT}.

Hartshorn \cite{Ha} has defined the notion   $\q_{\fa}(R)$ as the greatest integer $i$ such that  $\lc^{i}_{\fa}(R)$
 is not Artinian. Dibaei and yassemi  \cite{DY} extended this notion to arbitrary finite $R$--modules as

\begin{center}
 $\q_{\fa}(N)=\sup \{i\geqslant 0 | \lc^{i}_{\fa}(N)~~\text{is not Artinian}\}$
\end{center}

 Recall that a subclass of the class of all modules is called Serre class, if it is closed under taking submodules,
 quotients and extensions. Examples are given by the class of finite modules, Artinian modules and etc.
   In  \cite[Theorem 3.1 and 3.3]{AM} the Author and
  Melkersson characterized the membership of ordinary local cohomology modules in certain
  Serre class of the class of modules from upper bounds they also introduced
{\it Serre cohomological dimension of a module with respect to an ideal} \cite[Definition 3.5]{AM} as

\begin{center}
${\cd}_{(\fa,\mathcal S)}(N)=\sup\{n\geq 0|\lc^{i}_{\fa}(N)
\text{ is not in } \mathcal S \}$.
\end{center}

see also \cite[Definition 3.4]{AT}. Note that when $\mathcal S=\{0\}$ then ${\cd}_{(\fa,\mathcal S)}(N)={\cd}_{\fa}(N)$
 and when $\mathcal S$ is the class of Artinian modules, then ${\cd}_{(\fa,\mathcal S)}(N)={\q}_{\fa}(N)$.

Amjadi and Naghipour in \cite{AN} (resp. Asgharzadeh, Divaani-Aazar and Tousi in \cite{DNT})
extended ${\cd}_{\fa}(N)$ (resp. ${\q}_{\fa}(N)$) to generalized local cohomology modules as

\begin{center}
$\cd_{\fa}(M,N)=\sup \{i\geqslant 0 | \lc^{i}_{\fa}(M,N)\neq 0\}$
\end{center}
\begin{center}
  $(\text{resp}.~\q_{\fa}(M,N)=\sup \{i\geqslant 0 | \lc^{i}_{\fa}(M,N)~~\text{is not Artinian}\}).$
\end{center}
They also proved basic results about related notions. Also there are some other attempts to study generalized
local cohomology modules from upper bounds, see \cite[Corollary 2.7]{CT} and \cite[Theorem 5.1, Lemma 5.2 and Corollary 5.3 ]{CH}.

Our objective in this paper is to characterize the membership of generalized local cohomology
modules in certain Serre class of the category of $R$--modules from upper bounds. We will do it in section 2.
 Our main results in this section are theorems \ref{2-1}, \ref{T:casen+1} and \ref{P:loc/aloc}. In section 3, we will
define and study the {\it Serre cohomological dimension of two modules with respect to an ideal}. Our definition and
results in this paper improve and generalize all of the above mentioned one.
For unexplained terminology we refer to \cite{BSh} and \cite{BH}.

\section{main results}





The following theorem characterize the membership of generalized local cohomology modules to
a certain Serre class from upper bounds.

\begin{thm} \label {2-1}
Let $\mathcal S$ be a Serre subcategory of the category of
$R$--modules. Let $\fa$ an ideal of $R$, $M$ be a finite $R$--module of finite
projective dimension and $N$ an arbitrary finite $R$--module. Let $n \geqslant\pd M$ be a
non-negative integer. Then the following statements are equivalent:
     \begin{itemize}
               \item[(i)] $\lc^{i}_{\fa}(M,N)$ is in $\mathcal S$ for all $i> n$.
               \item[(ii)] $\lc^{i}_{\fa}(M,L)$ is in $\mathcal S$ for all $i> n$ and for
                               every finite $R$--module $L$ such that $\Supp_R(L)\subset\Supp_R(N)$.
               \item[(iii)] $\lc^{i}_{\fa}(M,R/\fp)$ is in $\mathcal S$ for all
                               $\fp\in\Supp_R(N)$ and all $i> n$.
               \item[(iv)] $\lc^{i}_{\fa}(M,R/\fp)$ is in $\mathcal S$ for all
                               $\fp\in\Min\Ass_R(N)$ and all $i> n$.
      \end{itemize}
\end{thm}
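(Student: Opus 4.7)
The plan is to establish the cycle (i) $\Rightarrow$ (ii) $\Rightarrow$ (iii) $\Rightarrow$ (iv) $\Rightarrow$ (i). Two of the links are immediate: (ii) $\Rightarrow$ (iii) follows by specializing to $L = R/\fp$ for each $\fp \in \Supp_R(N)$, and (iii) $\Rightarrow$ (iv) follows because $\Min\Ass_R(N) \subseteq \Supp_R(N)$. So the real substance lies in the two remaining implications.

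For (i) $\Rightarrow$ (ii), I would appeal to Gruson's filtration theorem: for any finite $R$-module $L$ with $\Supp_R(L) \subseteq \Supp_R(N)$ there is a chain $0 = L_0 \subset L_1 \subset \cdots \subset L_s = L$ whose quotients $L_j/L_{j-1}$ are homomorphic images of finite direct sums $N^{n_j}$. The proof then runs as a descending induction on the cohomological degree $i$, seeded at the Grothendieck-type vanishing bound (e.g.\ $\pd M + \dim N$, which majorises $\pd M + \dim L$ since $\dim L \leq \dim N$) above which $\lc^{i}_{\fa}(M,L)=0$ for every finite $L$ with support in $\Supp_R(N)$. At a fixed degree $i > n$, assuming the statement at degree $i+1$ for every such module, I run an inner induction on $j$. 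The short exact sequence $0 \to K_j \to N^{n_j} \to L_j/L_{j-1} \to 0$ (with $K_j \subseteq N^{n_j}$ a finite submodule, so supported in $\Supp_R(N)$) gives a long exact sequence in generalized local cohomology whose outer terms $\lc^{i}_{\fa}(M,N^{n_j}) \cong \lc^{i}_{\fa}(M,N)^{n_j}$ and $\lc^{i+1}_{\fa}(M,K_j)$ lie in $\mathcal{S}$ by hypothesis (i) and by the outer descending induction, respectively; Serre closure forces $\lc^{i}_{\fa}(M,L_j/L_{j-1}) \in \mathcal{S}$. Combining this with the long exact sequence associated to $0 \to L_{j-1} \to L_j \to L_j/L_{j-1} \to 0$ and the inner induction hypothesis on $j$ yields $\lc^{i}_{\fa}(M,L_j) \in \mathcal{S}$, and $j = s$ gives (ii).

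For (iv) $\Rightarrow$ (i), set $N' := \bigoplus_{\fp \in \Min\Ass_R(N)} R/\fp$. Since $\Min\Ass_R(N)$ is finite and $\mathcal{S}$ is closed under finite direct sums, (iv) implies $\lc^{i}_{\fa}(M,N') \cong \bigoplus_{\fp} \lc^{i}_{\fa}(M,R/\fp) \in \mathcal{S}$ for every $i > n$, so $N'$ satisfies (i). Since $\Supp_R(N') = \bigcup_{\fp \in \Min\Ass_R(N)} \V(\fp) = \Supp_R(N)$, the already-established implication (i) $\Rightarrow$ (ii) applied to $N'$ gives $\lc^{i}_{\fa}(M,L) \in \mathcal{S}$ for every $i > n$ and every finite $L$ with $\Supp_R(L) \subseteq \Supp_R(N)$; taking $L = N$ recovers (i) and closes the cycle.

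The chief obstacle is the bookkeeping in the descending induction for (i) $\Rightarrow$ (ii), because the cohomological degree and the auxiliary kernels $K_j$ produced by Gruson's filtration must be handled simultaneously; the correct organisation is to formulate the inductive hypothesis uniformly in the module variable within the class of finite modules supported in $\Supp_R(N)$, so that the step from degree $i+1$ to degree $i$ has access to the cohomology of every $K_j$ that arises. The assumption $n \geq \pd M$ is used implicitly to ensure that the working range $i > n$ is compatible with the standard vanishing and long exact sequence behaviour of $\lc^{i}_{\fa}(M,-)$ that seed and propagate the induction.
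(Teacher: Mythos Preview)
Your argument is correct, but it follows a genuinely different route from the paper. You prove (i)$\Rightarrow$(ii) directly via Gruson's filtration theorem and a descending induction on the cohomological degree, and then close the cycle by forming $N'=\bigoplus_{\fp\in\Min\Ass_R(N)}R/\fp$ and feeding it back through (i)$\Rightarrow$(ii). The paper instead avoids Gruson entirely: it runs a descending induction on $n$ and, for the substantive step (i)$\Rightarrow$(iii), argues by contradiction using the noetherian maximal condition---choosing $\fp\in\Supp_R(N)$ maximal among those with $\lc^{n+1}_{\fa}(M,R/\fp)\notin\mathcal S$, producing a nonzero map $N\to R/\fp$, and exploiting the resulting short exact sequences together with a prime filtration of $R/\fb$ for the image $\fb/\fp$. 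The implication (iii)$\Rightarrow$(ii) is then a simple prime-filtration argument, and (iv)$\Rightarrow$(iii) is obtained by applying (i)$\Rightarrow$(iii) to $R/\fq$ for $\fq\in\Min\Ass_R(N)$. In the remark following the proof the authors explicitly contrast their method with earlier work that used Gruson's theorem, so your choice of tool is precisely the one they set out to bypass. What your approach buys is a clean, uniform treatment of (i)$\Rightarrow$(ii) once Gruson is on the table; what the paper's approach buys is a more elementary argument relying only on the maximal condition in a noetherian ring. Both arguments share the same seeding issue for the descending induction (a Grothendieck-type vanishing bound such as $\pd M+\dim N$), so your brief remark about the role of $n\geq\pd M$ is no more and no less precise than the paper's citation of the analogous vanishing result.
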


\begin{proof}
We use descending induction on $n$. So we may assume that all
conditions are equivalent when $n$ is replaced by $n+1$ using \cite[
Theorem 2.5]{Ya}.

(i)$\Rightarrow$(iii). We want to show that
$\lc^{n+1}_{\fa}(M,R/\fp)$ is in $\mathcal S$ for each
$\fp\in\Supp_R(N)$. Suppose the contrary and let $\fp\in\Supp_R(N)$
be maximal of those $\fp\in\Supp_R(N)$ such that
$\lc^{n+1}_{\fa}(M,R/\fp)$ is not in $\mathcal S$. Since
$\fp\in\Supp_R(N)$, there is by \cite[Chap.(ii), \S\  4, $n^o$ 4,
Proposition 20]{Bo} a nonzero map $f:M\lo R/\fp$. Let
$\fb\supsetneqq \fp$ be the ideal of $R$ such that $\Image f=
\fb/\fp$. The exact sequence $0\rightarrow \Ker{f}\rightarrow
M\rightarrow \Image{f}\rightarrow 0$, yields the exact sequence
$$
\lc^{n+1}_{\fa}(M,N)\lo \lc^{n+1}_{\fa}(M,\Image f)\lo
\lc^{n+2}_{\fa}(M,\Ker f).
$$
Since $\Supp_R(\Ker f)\subset\Supp_R(N)$, by induction
$\lc^{n+2}_{\fa}(M,\Ker f)$ belongs to $\mathcal S$. It follows that
$\lc^{n+1}_{\fa}(M,\Image f)$ belongs to $\mathcal S$. There is a
filtration
$$
0=N_t\subset N_{t-1}\subset N_{t-2}\subset \dots\subset N_0=R/\fb
$$
of submodules of $R/\fb$, such that for each $0\leqslant i\leqslant
t$, $N_{i-1}/N_i\cong R/{\fq}_i$ where ${\fq}_i\in \V(\fb)$. Then by
the maximality of $\fp$, $\lc^{n+1}_{\fa}(M,R/{\fq}_i)$ is in
$\mathcal S$. Use the exact sequences $0\rightarrow N_{i}\rightarrow
N_{i-1}\rightarrow R/{\fq}_i\rightarrow 0$, to conclude that
$\lc^{n+1}_{\fa}(M,R/\fb)$ is in $\mathcal S$. Next the exact
sequence $0\rightarrow \Image f\rightarrow R/\fp\rightarrow
R/{\fb}\rightarrow 0$, yields the exact sequence
$$
\lc^{n+1}_{\fa}(M,\Image f)\lo \lc^{n+1}_{\fa}(M,R/\fp)\lo
\lc^{n+1}_{\fa}(M,R/\fb).
$$
It follows that $\lc^{n+1}_{\fa}(M,R/\fp)$ is in $\mathcal S$ which
is a contradiction.

(iii)$\Rightarrow$(ii). Use a filtration for $N$ as above.

(iv)$\Rightarrow$(iii). Let $\fp\in\Supp_R(N)$. Then $\fp\supset
\fq$ for some $\fq\in\Min\Ass_R(N)$. Hence $\fp\in\Supp_R(R/\fq)$.
Applying (i)$\Rightarrow$ (iii), it follows that
$\lc^{i}_{\fa}(M,R/\fp)$ is in $\mathcal S$ for all $i> n$.
\end{proof}

\begin{cor}\label{C:supN=supM}
Let $\mathcal S$ be a Serre subcategory of the category of
$R$--modules. Let $\fa$ an ideal of $R$ and $M$ be a finite $R$--module of finite
projective dimension. Let $n \geqslant\pd M$ be a
non-negetive integer. If $L$ and $N$ are
finite $R$--modules such that $\Supp_R(L)=\Supp_R(N)$, then
$\lc^{i}_{\fa}(M,L)$ is in $\mathcal S$ for all $i> n$ if and only
if $\lc^{i}_{\fa}(M,N)$ is in $\mathcal S$ for all $i> n$.
\end{cor}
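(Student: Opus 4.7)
The plan is to derive this corollary as a direct application of the equivalence (i)$\Leftrightarrow$(ii) in Theorem~\ref{2-1}, which is precisely a support-containment statement. The symmetry of the hypothesis $\Supp_R(L)=\Supp_R(N)$ should make the argument completely formal: no induction or spectral sequence is needed here, the heavy lifting has already been done in Theorem~\ref{2-1}.

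More concretely, I would first assume that $\lc^{i}_{\fa}(M,N)\in\mathcal{S}$ for all $i>n$. Since $\Supp_R(L)\subset\Supp_R(N)$ by hypothesis and $L$ is a finite $R$--module, applying (i)$\Rightarrow$(ii) of Theorem~\ref{2-1} with $N$ as the reference module yields $\lc^{i}_{\fa}(M,L)\in\mathcal{S}$ for all $i>n$. Then I would reverse the roles: assume $\lc^{i}_{\fa}(M,L)\in\mathcal{S}$ for all $i>n$, and, using the reverse inclusion $\Supp_R(N)\subset\Supp_R(L)$, apply (i)$\Rightarrow$(ii) of Theorem~\ref{2-1} again, this time with $L$ as the reference module, to conclude $\lc^{i}_{\fa}(M,N)\in\mathcal{S}$ for all $i>n$.

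Since the hypothesis $n\geqslant\pd M$ is inherited unchanged from Theorem~\ref{2-1}, there is no obstacle beyond correctly invoking the theorem in both directions. The main thing to be careful about is simply to check that the Serre category $\mathcal{S}$, the ideal $\fa$, the module $M$, and the integer $n$ satisfy the same hypotheses as in Theorem~\ref{2-1} in each application, which they do verbatim. No additional calculation, filtration, or long exact sequence argument is required.
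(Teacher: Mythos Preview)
Your proposal is correct and matches the paper's approach: the corollary is stated without proof immediately after Theorem~\ref{2-1}, since it follows at once from the implication (i)$\Rightarrow$(ii) applied in both directions exactly as you describe.
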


\begin{defn} \label {2-4}  {\rm(}see \cite [Definition 2.1]{AM} and \cite [Definition 3.1]{ATV}{\rm)}
Let $\mathcal{M}$ be a Serre subcategory of the category of
$R$--modules. We say that $\mathcal{M}$ is a {\it Melkersson
subcategory with respect to the ideal $\fa$} if for any
$\fa$--torsion $R$--module $X$, $0:_{X}\fa$ is in $\mathcal{M}$
implies that $X$ is in $\mathcal{M}$. $\mathcal{M}$ is called {\it
Melkersson subcategory} when it is a Melkersson subcategory with
respect to all ideals of $R$.
\end{defn}

When $\mathcal M$ is Melkersson subcategory of the category of
$R$--modules, we are able to weaken the condition $(iii)$ in
\ref{2-1}
 to require that
$\lc^{i}_{\fa}(M,R/\fp)$ is in $\mathcal M$ for all
$\fp\in\Supp_R(N)$, just for $i=n+1$.

\begin{thm}\label{T:casen+1}
Let $\mathcal M$ is Melkersson subcategory of the category of
$R$--modules $R$--module. Let $\fa$ an ideal of $R$ and $M$ be a finite $R$--module of finite
projective dimension. Let $n \geqslant\pd M$ be a
non-negetive integer. Then for
each finite $R$--module $N$ the conditions in theorem \ref{2-1} are
equivalent to:
\begin{enumerate}
  \item[(v)] $\lc^{n+1}_{\fa}(M,R/\fp)$ is in
    $\mathcal M$ for all $\fp\in\Supp_R(N)$.
\end{enumerate}
\end{thm}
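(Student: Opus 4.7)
The plan is to prove that (v) implies condition (iii) of Theorem \ref{2-1}, since the reverse implication is trivial. That is, under (v), I will show $\lc^{i}_{\fa}(M,R/\fp) \in \mathcal{M}$ for every $\fp \in \Supp_R(N)$ and every $i > n$, strengthening the single-degree statement at $i = n+1$. I would argue by maximal counterexample: suppose the set $\Sigma$ of primes $\fp \in \Supp_R(N)$ with $\lc^{i}_{\fa}(M,R/\fp) \notin \mathcal{M}$ for some $i > n$ is non-empty, and pick $\fp$ maximal in $\Sigma$ (possible by Noetherianity of $R$). If $\fa \subseteq \fp$, then $R/\fp$ is $\fa$-torsion, so $\lc^{i}_{\fa}(M,R/\fp) \cong \Ext^{i}_R(M,R/\fp)$, which vanishes for $i > \pd M$ and hence for all $i > n \geq \pd M$, contradicting $\fp \in \Sigma$. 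Thus $\fa \not\subseteq \fp$, and I may choose $x \in \fa \setminus \fp$.

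The core tool is the short exact sequence $0 \to R/\fp \xrightarrow{x} R/\fp \to R/(\fp + xR) \to 0$ (exact since $x$ is a nonzerodivisor on $R/\fp$), which gives rise to
\[
\cdots \to \lc^{i-1}_{\fa}\bigl(M, R/(\fp+xR)\bigr) \to \lc^{i}_{\fa}(M,R/\fp) \xrightarrow{x} \lc^{i}_{\fa}(M,R/\fp) \to \cdots.
\]
Every $\fq \in \Supp_R(R/(\fp+xR))$ strictly contains $\fp$ (since $x \notin \fp$) and lies in $\Supp_R(N)$, so by maximality of $\fp$, $\lc^{j}_{\fa}(M,R/\fq) \in \mathcal{M}$ for all $j > n$. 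Applying Theorem \ref{2-1} (the implication (iii)$\Rightarrow$(i)) to $R/(\fp+xR)$ with the same threshold $n$ then yields $\lc^{j}_{\fa}(M,R/(\fp+xR)) \in \mathcal{M}$ for all $j > n$. The degree $i = n+1$ is covered by (v) directly. For $i \geq n+2$, the long exact sequence identifies $0:_{\lc^{i}_{\fa}(M,R/\fp)} x$ as a quotient of $\lc^{i-1}_{\fa}(M,R/(\fp+xR)) \in \mathcal{M}$, hence it belongs to $\mathcal{M}$. Since $\lc^{i}_{\fa}(M,R/\fp)$ is $\fa$-torsion and therefore $xR$-torsion, the Melkersson property applied to the ideal $xR$ forces $\lc^{i}_{\fa}(M,R/\fp) \in \mathcal{M}$, contradicting $\fp \in \Sigma$.

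The main obstacle is bootstrapping from the single degree $i = n+1$ supplied by (v) up to all $i > n$, and this is precisely where the Melkersson hypothesis becomes indispensable. The element $x \in \fa \setminus \fp$ produces a principal ideal $xR \subseteq \fa$ under which the torsion criterion of Definition \ref{2-4} is available, and the kernel of multiplication by $x$ on $\lc^{i}_{\fa}(M,R/\fp)$ is controlled by the $(i-1)$-st cohomology of a module whose support is strictly smaller than $\V(\fp)$. Beyond this, the argument is a clean interplay between Theorem \ref{2-1}, which propagates Serre containment across the support, and the Melkersson criterion, which lifts it from one cohomological degree to the next.
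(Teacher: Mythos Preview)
Your proof is correct and rests on the same essential ingredients as the paper's: the short exact sequence $0 \to R/\fp \xrightarrow{x} R/\fp \to R/(\fp+xR) \to 0$ for some $x \in \fa \setminus \fp$, together with the Melkersson criterion to pass from $0:_{\lc^{i}_{\fa}(M,R/\fp)} x \in \mathcal M$ to $\lc^{i}_{\fa}(M,R/\fp) \in \mathcal M$. The organization, however, is genuinely different. The paper argues by ascending induction on the cohomological degree $i \geq n+2$, treating all primes $\fp \in \Supp_R(N)$ simultaneously at each step and redoing the prime-filtration argument for $R/(\fp+xR)$ to show $\lc^{i-1}_{\fa}(M,R/(\fp+xR)) \in \mathcal M$. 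You instead run Noetherian induction on primes (maximal counterexample in $\Supp_R(N)$) and handle all degrees $i>n$ at once, invoking Theorem~\ref{2-1} as a black box to control $\lc^{j}_{\fa}(M,R/(\fp+xR))$ for $j>n$. Your route is a little more economical because it avoids rewriting the filtration argument already encapsulated in Theorem~\ref{2-1}, at the cost of tacitly using that $\Supp_R(N)$ is specialization-closed; the paper's route is more self-contained but repeats that machinery.
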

\begin{proof}
(v)$\Rightarrow$(iv). We prove by induction on $i\geq n+2$ that
$\lc^{i}_{\fa}(M,R/{\fp})$ is in $\mathcal M$ for all
$\fp\in\Supp_R(N)$. It is enough to treat the case $i=n+2$. Suppose
that $\lc^{n+2}_{\fa}(M,R/{\fp})$ is not in $\mathcal M$ for some
$\fp\in \Supp_R(M)$. It follows that $\fa\not\subset \fp$, since
otherwise $\lc_{\fa}^{n+2}(M,R/{\fp})=0$, because $n+2>0$. Take
$x\in\fa\setminus \fp$ and put $L=R/({\fp}+{x}R)$. Then
$\Supp_R(L)\subset\Supp_R(N)$. We have a finite filtration
$$
0=L_t\subset L_{t-1}\subset L_{t-2}\subset \dots\subset L_0=L
$$
such that $L_{i-1}/L_{i}\cong R/{\fp}_i$ for each $1 \leq i\leq t$
where ${\fp}_i\in \Supp_R(N)$. Using the exact sequence
$$
\lc^{n+1}_{\fa}(M,L_i)\lo \lc^{n+1}_{\fa}(M,L_{i-1})\lo
\lc^{n+1}_{\fa}(M,R/{\fp}_i)
$$
for each $1 \leq i\leq t$, shows that $\lc^{n+1}_{\fa}(M,L)$ is in
$\mathcal M$. Consider the exact sequence $0\rightarrow
R/\fp\overset x \rightarrow R/\fp\rightarrow L\rightarrow 0,$ which
induces the following exact sequence
$$
\lc^{n+1}_{\fa}(M,L)\lo \lc^{n+2}_{\fa}(M,R/{\fp}) \overset x\lo
\lc^{n+2}_{\fa}(M,R/{\fp}).
$$
This shows that $0:_{\lc^{n+2}_{\fa}(M,R/{\fp})}{x}$ is in $\mathcal
M$. Since $\lc^{n+2}_{\fa}(M,R/{\fp})$ is $\fa$--torsion, by
\cite[Lemma 2.3]{AM} $\lc^{n+2}_{\fa}(M,R/{\fp})$ is in $\mathcal M$, which
is a contradiction.
\end{proof}


\begin{rem}\label{R:appl}
In theorems \ref{2-1} and \ref{T:casen+1} we may specialize
$\mathcal S$ to any of the Melkersson subcategories, given in \cite[Example 2.4]{AM}
to obtain characterizations of artinianness, vanishing, finiteness
of the support etc.
\end{rem}

Cho and Tang gave some parts of them in \cite[Theorem 2.5,
2.6 and Corrolary 2.7]{CT}  for the case of artinianness in local rings. In the case of vanishing,
i.e., when $\mathcal S$ merely consists of zero modules and finiteness of
 support (in local case) some parts
of them were studied in \cite[Theorem B]{AN} and \cite[Theorem 5.1 part (a)]{CH}.
 These authors used Gruson's theorem, \cite[Theorem
4.1]{V}, while we just used the maximal condition in a noetherian
ring.


\begin{thm}\label{P:loc/aloc}
Let $\mathcal S$ be a Serre subcategory of the category of
$R$--modules and $M$ be a finite $R$--module of finite projective
dimension. Let $N$ be a finite $R$--module, $\fa$
 be an ideal of $R$ and $n \geqslant \pd M$ be an integer such that $\lc^{i}_{\fa}(M,N)$
 belongs to $\mathcal S$ for all $i> n$. If $\fb$ is an ideal of $R$
 such that $\lc^{n}_{\fa}(M,N/{\fb}N)$ belongs to $\mathcal S$, then the module
  $\lc^{n}_{\fa}(M,N)/{\fb}\lc^{n}_{\fa}(M,N)$ belongs to $\mathcal S$.
\end{thm}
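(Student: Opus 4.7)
Set $H=\lc^{n}_{\fa}(M,N)$ and, using that $R$ is noetherian, write $\fb=(x_{1},\dots,x_{k})$. My plan is to produce two short exact sequences that sandwich the image of a map $\alpha\colon\lc^{n}_{\fa}(M,\fb N)\to H$ between $\fb H$ and $H$, each sandwich layer lying in $\mathcal S$.

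First, I will apply $\lc^{*}_{\fa}(M,-)$ to $0\to\fb N\to N\to N/\fb N\to 0$ to obtain the long exact sequence
$$
\lc^{n}_{\fa}(M,\fb N)\xrightarrow{\alpha}H\lo\lc^{n}_{\fa}(M,N/\fb N)\lo\lc^{n+1}_{\fa}(M,\fb N).
$$
Since $\fb N$ is a finite $R$--module with $\Supp(\fb N)\subseteq\Supp(N)$, Theorem \ref{2-1} (the equivalence (i)$\Leftrightarrow$(ii)) gives $\lc^{n+1}_{\fa}(M,\fb N)\in\mathcal S$, which together with the hypothesis $\lc^{n}_{\fa}(M,N/\fb N)\in\mathcal S$ places $H/\Image\alpha$ in $\mathcal S$.

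Second, I will use the surjection $\sigma\colon N^{k}\twoheadrightarrow\fb N$ defined by $(n_{i})\mapsto\sum x_{i}n_{i}$, with kernel $K\subseteq N^{k}$. Since $\Supp K\subseteq\Supp N$, a second application of Theorem \ref{2-1} gives $\lc^{n+1}_{\fa}(M,K)\in\mathcal S$, so the long exact sequence attached to $0\to K\to N^{k}\to\fb N\to 0$ shows that the induced map $\sigma_{*}\colon H^{k}\to\lc^{n}_{\fa}(M,\fb N)$ has $\Coker\sigma_{*}\in\mathcal S$. The key computation is that for each $i$ the composite $N\xrightarrow{x_{i}}\fb N\hookrightarrow N$ is multiplication by $x_{i}$ on $N$, so $\alpha\circ\sigma_{*}\colon H^{k}\to H$ sends $(h_{i})\mapsto\sum x_{i}h_{i}$, with image exactly $\fb H$. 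Therefore $\Image\alpha/\fb H$ is a quotient of $\Coker\sigma_{*}$, hence lies in $\mathcal S$.

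Combining the two pieces via the short exact sequence
$$
0\lo\Image\alpha/\fb H\lo H/\fb H\lo H/\Image\alpha\lo 0
$$
and the closure of $\mathcal S$ under extensions will then yield $H/\fb H\in\mathcal S$, as desired. The main technical input is Theorem \ref{2-1}, used to transport the Serre condition from $N$ to the auxiliary finite modules $\fb N$ and $K$; I do not anticipate any real obstacle beyond correctly identifying the image of $\alpha\circ\sigma_{*}$ with $\fb H$.
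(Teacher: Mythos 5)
Your argument is correct, and it is genuinely different from the one in the paper. You fix generators $\fb=(x_1,\dots,x_k)$ and sandwich $\fb H$ inside $\Image\alpha$ inside $H$, controlling the two layers $H/\Image\alpha$ and $\Image\alpha/\fb H$ by two applications of Theorem \ref{2-1} to the auxiliary finite modules $\fb N$ and $K=\Ker(N^k\twoheadrightarrow\fb N)$, both supported inside $\Supp_R(N)$; the identification of $\Image(\alpha\circ\sigma_*)$ with $\fb H$ is just $R$-linearity of the functor $\lc^{n}_{\fa}(M,-)$ together with the fact that it commutes with finite direct sums, and the final extension argument is clean. (Note that in your first step you do not even need $\lc^{n+1}_{\fa}(M,\fb N)\in\mathcal S$: the long exact sequence already gives an injection $H/\Image\alpha\hookrightarrow\lc^{n}_{\fa}(M,N/\fb N)$.) The paper instead argues by contradiction via the noetherian maximal condition: it picks a submodule $L\subseteq N$ maximal with $\lc^{n}_{\fa}(M,N/L)\otimes_R R/\fb\notin\mathcal S$, shows $\G_{\fb}(N/L)=0$ by a Tor comparison (using Melkersson's Lemma 3.1), chooses a single $N/L$-regular element $x\in\fb$, and derives a contradiction from the resulting multiplication sequence. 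Your route avoids both the maximality argument and the Tor lemma, at the modest cost of choosing a finite generating set for $\fb$ (harmless since $R$ is noetherian); it is arguably more transparent and yields the quantitative refinement that $H/\fb H$ is an extension of a submodule of $\lc^{n}_{\fa}(M,N/\fb N)$ by a subquotient of $\lc^{n+1}_{\fa}(M,K)$.
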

\begin{proof}
Suppose $\lc^{n}_{\fa}(M,N)/{\fb}\lc^{n}_{\fa}(M,N)$ is not in
$\mathcal S$. Let $L$ be a maximal submodule of $N$ such that
$\lc^{n}_{\fa}(M,N/L)\otimes_R{R/\fb}$ is not in $\mathcal S$. Let
$T\supset L$ be such that $\G_{\fb}(N/L)=T/L$. Since
$\Supp_R(T/L)\subset{\V(\fb)\cap \Supp_R(N)}$,\
$\lc^{i}_{\fa}(M,T/L)$ belongs to $\mathcal S$ for all $i\geq n$ by
\ref{2-1}.

From the exact sequence $0 \rightarrow T/L\rightarrow N/L\rightarrow
N/T\rightarrow 0$, we get the exact sequence
\begin{center}
$\lc^{n}_{\fa}(M,T/L)\lo \lc^{n}_{\fa}(M,N/L)\overset{f}\lo
\lc^{n}_{\fa}(M,N/T)\lo \lc^{n+1}_{\fa}(M,T/L)$.
\end{center}
$\Tor^R_{i}(R/\fb,\Ker f)$ and $\Tor^R_{i}(R/\fb,\Coker f)$ are in
$\mathcal S$ for all $i$, because $\Ker f$ and $\Coker f$ are in
$\mathcal S$. It follows from \cite[Lemma 3.1]{Mel}, that $\Ker
(f\otimes{R/\fb})$ and $\Coker (f\otimes{R/\fb})$ are in $\mathcal
S$. Since $\lc^{n}_{\fa}(M,N/L)\otimes_R{R/\fb}$ is not in $\mathcal
S$,
 the module $\lc^{n}_{\fa}(M,N/T)\otimes_R{R/\fb}$ can not be in $\mathcal S$. By the maximality of $L$, we get $T=L$.
  We have shown that $\G_{\fb}(N/L)=0$ and therefore we can take $x\in \fb$ such that the sequence
   $0\rightarrow N/L\overset{x}\rightarrow N/L\rightarrow N/(L+{x}N)\rightarrow 0$ is exact. Thus we get the exact
   sequence
\begin{center}
$\lc^{n}_{\fa}(M,N/L)\overset{x}\rightarrow \lc^{n}_{\fa}(M,N/L)\rightarrow
\lc^{n}_{\fa}(M,N/L+{x}N)\rightarrow \lc^{n+1}_{\fa}(M,N/L).$
\end{center}
 This yields the exact sequence
\begin{center}
$0\rightarrow
\lc^{n}_{\fa}(M,N/L)/{x}\lc^{n}_{\fa}(M,N/L)\rightarrow
\lc^{n}_{\fa}(M,N/L+{x}N)\rightarrow C\rightarrow 0$,
\end{center}
where $C\subset \lc^{n+1}_{\fa}(M,N/L)$ and thus $C$ is in $\mathcal
S$.

Note that $x\in \fb$. Hence we get the exact sequence
\begin{center}
$\Tor^R_{1}(R/\fb,C)\lo \lc^{n}_{\fa}(M,N/L)\otimes_R{R/\fb}\lo
\lc^{n}_{\fa}(M,N/(L+{x}N))\otimes_R{R/\fb}$
\end{center}
However $L\subsetneqq{(L+{x}N)}$ and therefore
$\lc^{n}_{\fa}(M,N/(L+{x}N))\otimes_R{R/\fb}$ belongs to $\mathcal
S$ by the maximality of $L$. Consequently
$$\lc^{n}_{\fa}(M,N/L)\otimes_R{R/\fb}$$ is in $\mathcal S$ which is a
contradiction.
\end{proof}

\begin{lem} \label {2-0}
Let $M$ and $N$ be two finite $R$--modules such that $\Supp_R(M)
\cap \Supp_R(N)\subseteq \V(\fa)$. Then $\lc^{i}_{\fa}(M, N)\cong
\Ext^i_{R}(M, N)$ for all $i\geq 0$.
\end{lem}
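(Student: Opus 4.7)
The plan is to prove the result by induction on $i \geq 0$, after first extending the statement to allow $N$ to be an arbitrary (not necessarily finite) $R$-module satisfying $\Supp_R(M)\cap\Supp_R(N) \subseteq \V(\fa)$. This extension is essentially forced on us because the induction step embeds $N$ into its injective hull, which is not finite.

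For the base case $i=0$, $\lc^{0}_{\fa}(M,N) = \varinjlim_n \Hom_R(M/\fa^nM,N) = \G_{\fa}(\Hom_R(M,N))$, so the claim reduces to showing that $\Hom_R(M,N)$ is itself $\fa$-torsion. Given $f \in \Hom_R(M,N)$, the image $f(M)$ is a finite quotient of $M$ with $\Supp_R(f(M)) \subseteq \Supp_R(M)\cap\Supp_R(N) \subseteq \V(\fa)$, hence is annihilated by a power of $\fa$; consequently so is $f$. This yields $\lc^0_{\fa}(M,N)=\Hom_R(M,N)=\Ext^0_R(M,N)$, and crucially the argument nowhere uses finiteness of $N$.

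For the inductive step, embed $N \hookrightarrow E:=\E_R(N)$ with cokernel $C$. Since $\Ass_R(E)=\Ass_R(N)$, we have $\Supp_R(E)=\Supp_R(N)$ and $\Supp_R(C)\subseteq \Supp_R(N)$, so the support hypothesis passes to $E$ and $C$. The short exact sequence $0\to N\to E\to C\to 0$ produces long exact sequences in both $\Ext^{\bullet}_R(M,-)$ and $\lc^{\bullet}_{\fa}(M,-)$ (the latter because $\varinjlim$ is exact on directed systems), and both functors vanish on the injective $E$ in positive degrees: $\Ext^i_R(M,E)=0$ trivially, while $\lc^i_{\fa}(M,E) = \varinjlim_n \Ext^i_R(M/\fa^nM, E)=0$ for the same reason. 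Thus for $i\geq 2$ the sequences collapse to $\lc^i_{\fa}(M,N)\cong \lc^{i-1}_{\fa}(M,C)$ and $\Ext^i_R(M,N)\cong \Ext^{i-1}_R(M,C)$, and the induction hypothesis applied to $(M,C)$ closes those cases. For $i=1$, I would compare the two resulting four-term exact sequences via the natural transformation $\lc^0_{\fa}(M,-)\to \Hom_R(M,-)$ and invoke the five-lemma, using the extended base case on the non-finite modules $E$ and $C$. The only real obstacle is the need to handle non-finite modules in the second variable, which is precisely why the base case is proved in its stronger form.
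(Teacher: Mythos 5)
Your argument is correct, but it follows a genuinely different route from the paper's. The paper's proof is a one-step computation: it takes a minimal injective resolution $E^\bullet$ of $N$, uses $\Supp_R(E^i)\subseteq\Supp_R(N)$ to conclude that each $\Hom_R(M,E^i)$ has support in $\Supp_R(M)\cap\Supp_R(N)\subseteq\V(\fa)$ and is therefore $\fa$--torsion, so that $\G_{\fa}(\Hom_R(M,E^\bullet))=\Hom_R(M,E^\bullet)$; taking cohomology of the two (equal) complexes yields $\lc^i_{\fa}(M,N)$ on one side and $\Ext^i_R(M,N)$ on the other. You isolate the same key fact --- $\Hom_R(M,X)$ is $\fa$--torsion whenever $\Supp_R(M)\cap\Supp_R(X)\subseteq\V(\fa)$, proved by looking at the finite images $f(M)$ --- as your base case, and then propagate it by dimension shifting along $0\to N\to\E_R(N)\to C\to 0$, which is exactly why you must enlarge the statement to non-finite $N$ and treat $i=1$ separately with a five-lemma comparison. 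What your version buys is that it works directly from the $\varinjlim_n\Ext^i_R(M/{\fa}^nM,-)$ definition, so it avoids both the composite-functor formula $\lc^i_{\fa}(M,N)\cong\lc^i(\G_{\fa}(\Hom_R(M,E^\bullet)))$ and the support property of minimal injective resolutions, which the paper invokes without citation; the price is a longer induction and the (routine but genuinely needed) verification that the comparison maps $\lc^i_{\fa}(M,-)\to\Ext^i_R(M,-)$, induced by the surjections $M\to M/{\fa}^nM$, commute with the connecting homomorphisms of the two long exact sequences. Both proofs are sound.
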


\begin{proof}
There is a minimal injective resolution $E^\bullet$ of $N$ such that
$\Supp_R(E^i)\subseteq \Supp_R(N)$ for all $i\geq 0$. Since
$\Supp_R(\Hom_{R}(M, E^i))\subseteq \Supp_R(M) \cap
\Supp_R(N)\subseteq \V(\fa)$, $\Hom_{R}(M, E^i)$ is
$\fa$--torsion. Therefore, for all $i\geq 0$,
$$\begin{array}{llll}
\lc^i_\fa(M, N)\!\!&= \ \ \lc^{i}(\G_{\fa}(\Hom_{R}(M,
E^\bullet)))\\&= \ \ \lc^{i}(\Hom_{R}(M, E^\bullet))\\&= \ \
\Ext^i_{R}(M, N),
\end{array}$$
as we desired.
\end{proof}


Asgharzadeh, Divaani-Aazar and Tousi, in \cite[Theorem 3.3 (i)]{ADT} proved
the following corollary when $\mathcal S$ is the category of Artinian $R$--modules
 with an strong assumption that $N$ has finite Krull dimension. This condition is
 very near to local case, while it is a simple conclusion of Theorem \ref{P:loc/aloc}
 without that strong assumption.

\begin{cor}\label{T:H/bH}
Let $\mathcal S$ be a Serre subcategory of the category of
$R$--modules and $M$ be a finite $R$--module of finite projective
dimension. Let $N$ be a finite $R$--module, $\fa$ be an ideal of $R$
and $n>\pd M$ be an integer such that $\lc^{i}_{\fa}(M,N)$ belongs
to $\mathcal S$ for all $i>n$ , then
$\lc^{n}_{\fa}(M,N)/{\fa}\lc^{n}_{\fa}(M,N)$ belongs to $\mathcal
S$.
\end{cor}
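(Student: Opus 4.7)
The plan is to deduce the corollary directly from Theorem \ref{P:loc/aloc} by taking $\fb=\fa$. For that application, the only nontrivial hypothesis to verify is that $\lc^{n}_{\fa}(M,N/\fa N)$ belongs to $\mathcal S$; the rest is already assumed.

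First I would observe that $N/\fa N$ is annihilated by $\fa$, so $\Supp_R(N/\fa N)\subseteq \V(\fa)$, and in particular
\[
\Supp_R(M)\cap \Supp_R(N/\fa N)\subseteq \V(\fa).
\]
Hence Lemma \ref{2-0} applies and gives $\lc^{i}_{\fa}(M,N/\fa N)\cong \Ext^{i}_{R}(M,N/\fa N)$ for every $i\geq 0$. Since $n>\pd M$, the Ext group $\Ext^{n}_{R}(M,N/\fa N)$ vanishes, so $\lc^{n}_{\fa}(M,N/\fa N)=0$, which certainly lies in $\mathcal S$.

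Now all hypotheses of Theorem \ref{P:loc/aloc} are satisfied with $\fb=\fa$: the module $M$ has finite projective dimension, $n\geq \pd M$, $\lc^{i}_{\fa}(M,N)\in \mathcal S$ for all $i>n$ by assumption, and $\lc^{n}_{\fa}(M,N/\fa N)\in \mathcal S$ by the step above. The conclusion of that theorem then yields $\lc^{n}_{\fa}(M,N)/\fa\lc^{n}_{\fa}(M,N)\in \mathcal S$, which is exactly what we wanted.

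There is essentially no obstacle here; the corollary is a short specialization. The only subtle point is the strengthening of the hypothesis from $n\geq \pd M$ in Theorem \ref{P:loc/aloc} to $n>\pd M$ in the corollary, which is precisely what is needed to force the Ext group $\Ext^{n}_{R}(M,N/\fa N)$ to vanish and thereby reduce the verification of the auxiliary hypothesis of Theorem \ref{P:loc/aloc} to the isomorphism supplied by Lemma \ref{2-0}.
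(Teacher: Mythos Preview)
Your proof is correct and follows essentially the same route as the paper: apply Lemma~\ref{2-0} to identify $\lc^{n}_{\fa}(M,N/\fa N)$ with $\Ext^{n}_{R}(M,N/\fa N)=0$ (using $n>\pd M$), then invoke Theorem~\ref{P:loc/aloc} with $\fb=\fa$. You supply slightly more detail than the paper (explicitly verifying the support hypothesis of Lemma~\ref{2-0}), but the argument is the same.
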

\begin{proof}
Note that $\lc^{i}_{\fa}(M,N/{\fa}N)\cong \Ext^{i}_{R}(M,N/{\fa}N)$
for all $i\geqslant 0$ by lemma \ref{2-0}, so
$\lc^{n}_{\fa}(M,N/{\fa}N)=0$, now the proof is complete by theorem
\ref{P:loc/aloc} .
\end{proof}

\begin{cor}\label{T:H/aH}
Let $\mathcal S$ be a Serre subcategory of the category of
$R$--modules and $M$ be a finite $R$--module of finite projective
dimension. Let $N$ be a finite $R$--module, $\fa$ be an ideal of $R$
and $n>\pd M$ be an integer such that $\lc^{i}_{\fa}(M,N)$ belongs
to $\mathcal S$ for all $i>n$ , then $\lc^{n}_{\fa}(M,N)$ is not
finitely generated. In particular $\lc^{n}_{\fa}(M,N)\neq 0$.
\end{cor}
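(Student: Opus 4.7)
The plan is to proceed by contradiction: suppose $H := \lc^n_\fa(M,N)$ is finitely generated, and derive that $H$ lies in $\mathcal S$. The first input is Corollary~\ref{T:H/bH}, whose hypotheses coincide with the present ones; it yields $H/\fa H \in \mathcal S$ directly. The second input is the $\fa$-torsion property of $H$, which is automatic for any generalized local cohomology module with respect to $\fa$; combined with the finite-generation assumption this produces an integer $k\geq 1$ with $\fa^k H = 0$ (each of the finitely many generators is killed by some power of $\fa$, and one takes the maximum).

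The core step is then a filtration argument applied to the descending chain
\[
H \supseteq \fa H \supseteq \fa^2 H \supseteq \cdots \supseteq \fa^k H = 0.
\]
For each $i$, multiplication induces a canonical surjection $(H/\fa H) \otimes_R (\fa^{i-1}/\fa^i) \twoheadrightarrow \fa^{i-1}H/\fa^i H$. Since $R$ is noetherian, $\fa^{i-1}/\fa^i$ is a finitely generated $R/\fa$-module, hence a quotient of some $(R/\fa)^{m_i}$; tensoring with $H/\fa H$ exhibits $\fa^{i-1}H/\fa^i H$ as a quotient of $(H/\fa H)^{m_i}$. Because $\mathcal S$ is Serre (closed under finite direct sums and under quotients) each graded piece lies in $\mathcal S$. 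Closure under extensions, applied inductively up the filtration through the short exact sequences $0 \to \fa^i H \to \fa^{i-1}H \to \fa^{i-1}H/\fa^i H \to 0$, finally places $H$ itself in $\mathcal S$.

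This gives the conclusion in contrapositive form: once $H$ is finitely generated it is absorbed by $\mathcal S$, so assuming $H$ is not already in $\mathcal S$ (the regime in which the corollary is interesting; note $0\in\mathcal S$, so this also covers $H\neq 0$) forces $H$ to be infinitely generated, and in particular nonzero. The main obstacle I anticipate is the filtration step, specifically verifying that multiplication descends to a well-defined surjection from $(H/\fa H) \otimes_R (\fa^{i-1}/\fa^i)$ onto $\fa^{i-1}H/\fa^i H$ and keeping careful track of which Serre closure property (sums, quotients, or extensions) is invoked at each stage. Everything else is a direct appeal to Corollary~\ref{T:H/bH} or routine bookkeeping.
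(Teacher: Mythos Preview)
Your argument is correct and reaches the same conclusion by the same contrapositive shape, but the mechanism in the middle differs from the paper's. You apply Corollary~\ref{T:H/bH} once, with the ideal $\fa$ itself, obtaining only $H/\fa H\in\mathcal S$, and then climb the filtration $H\supseteq\fa H\supseteq\cdots\supseteq\fa^k H=0$ using the surjections $(H/\fa H)\otimes_R(\fa^{i-1}/\fa^i)\twoheadrightarrow\fa^{i-1}H/\fa^i H$ together with closure of $\mathcal S$ under finite sums, quotients, and extensions. The paper instead exploits the radical-invariance of generalized local cohomology: since $\lc^{i}_{\fa^t}(M,N)\cong\lc^{i}_{\fa}(M,N)$ for all $i$, the hypotheses of Corollary~\ref{T:H/bH} hold with $\fa$ replaced by $\fa^t$, and a single application gives $H/\fa^t H\in\mathcal S$; as $\fa^t H=0$ this is $H$ itself. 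The paper's route is shorter and avoids the filtration bookkeeping entirely, while your route has the mild advantage of not invoking the isomorphism $\lc^{i}_{\fa^t}(M,N)\cong\lc^{i}_{\fa}(M,N)$ and of making transparent exactly which Serre closure properties are used. You are also right to flag that the statement only yields a contradiction under the implicit assumption $\lc^{n}_{\fa}(M,N)\notin\mathcal S$; the paper's proof relies on the same unstated hypothesis.
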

\begin{proof}
Suppose  $\lc^{n}_{\fa}(M,N)$ is finitely generated. Then there
exist an integer $t$ such that ${\fa}^t\lc^{n}_{\fa}(M,N)=0$ but
$\lc^{n}_{{\fa}^t}(M,N)\cong \lc^{n}_{\fa}(M,N)$ for all $i>0$. So
$\lc^{n}_{\fa}(M,N)\cong
\lc^{n}_{{\fa}^t}(M,N)/{\fa}^t\lc^{n}_{{\fa}^t}(M,N)$, is in
$\mathcal S$, which is a contradiction.
\end{proof}

\section{Cohomological dimension with respect to Serre class}
In the following we introduce the last integer such that the generalized
local cohomology modules belong to a Serre class.

\begin{defn}\label{D:t}
Let $\mathcal S$ be a Serre subcategory of the category of
$R$--modules. Let $\fa$ be an ideal of $R$ and $M,N$ two
$R$--modules. We define
\begin{center}
${\cd}_{(\fa,\mathcal S)}(M,N)=\sup\{n\geq 0|\lc^{i}_{\fa}(M,N)
\text{ is not in } \mathcal S \}$.
\end{center}
with the usual convention that the suprimum of the empty set of integers
 is interpreted as $-\infty$.

For example when $\mathcal S$=\{0\}, then ${\cd}_{(\fa,\mathcal
S)}(M,N)= \cd_{\fa}(M,N)$ and when $\mathcal S$ is the class of
Artinian modules, then ${\cd}_{(\fa,\mathcal S)}(M,N)=\q_{\fa}(M,N)$
as in \cite{AN} and \cite{ADT}.
\end{defn}

In the following we study the main properties of this invariant.

\begin{prop}\label{P:main}
Let $\mathcal S$ be a Serre subcategory of the category of
$R$--modules. Let $\fa$ be an ideal of $R$ and $M$ a finite
$R$--module of finite projective dimension. The following statements
hold.
\begin{enumerate}
  \item[\rm{(}a\rm{)}] Let $\mathcal S_1,\mathcal S_2$ be two
    Serre subcategories of the category of $R$--modules
     such that $\mathcal S_1\subset \mathcal S_2$.
     Then ${\cd}_{(\fa,\mathcal
S_2)}(M,N)\leq{\cd}_{(\fa,\mathcal S_1)}(M,N)$
     for every finite $R$--module $N$.
     In particular ${\cd}_{(\fa,\mathcal
S)}(M,N)\leq\cd_{\fa}(M,N)$ for each
     Serre subcategory $\mathcal S$ of the category of $R$--modules.
  \item[\rm{(}b\rm{)}] If $L$ and $N$ are finite $R$--modules s.t.
    $\Supp_R(L)\subset\Supp_R(N)$, then
    ${\cd}_{(\fa,\mathcal S)}(M,L)\leq{\cd}_{(\fa,\mathcal S)}(M,N)$
 and
    equality holds if $$\Supp_R(L)= \Supp_R(N).$$
  \item[\rm{(}c\rm{)}] Let $0\rightarrow N^{\prime}\rightarrow N
    \rightarrow N^{\prime\prime}\rightarrow 0$
    be an exact sequence of finite $R$--modules. Then
    $${\cd}_{(\fa,\mathcal S)}(M,N)=
    \max \{{\cd}_{(\fa,\mathcal S)}(M,N^{\prime}),
    {\cd}_{(\fa,\mathcal S)}(M,N^{\prime\prime})\}.$$
  \item[\rm{(}d\rm{)}] ${\cd}_{(\fa,\mathcal S)}(M,R)=\sup\{{\cd}_{(\fa,\mathcal S)}(M,N)|N
    \text{ is a finite } R\text{--module } \}$.
  \item[\rm{(}e\rm{)}] ${\cd}_{(\fa,\mathcal S)}(M,N)=
    \sup\{{\cd}_{(\fa,\mathcal S)}(M,R/\fp)|\fp\in\Supp_R(N) \}$.
  \item[\rm{(}f\rm{)}] ${\cd}_{(\fa,\mathcal S)}(M,N)=
    \sup\{{\cd}_{(\fa,\mathcal S)}(M,R/\fp)|\fp\in\Min\Ass_R(N) \}$.
\end{enumerate}
If $\mathcal M$ is  Melkersson  subcategory, then the following
statements hold:
\begin{enumerate}
  \item[\rm{(}g\rm{)}] ${\cd}_{(\fa,\mathcal M)}(M,N)=
    \min\{r\geq 0|\lc^{r+1}_{\fa}(M,R/\fp)\in\mathcal M
     \text{ for all }\fp\in\Supp_R(N) \}$.
  \item[\rm{(}h\rm{)}] For each integer $i$ with $1+\pd M\leq i\leq{\cd}_{(\fa,\mathcal M)}(M,N)+\pd M$,
     there exists $\fp\in \Supp_R(N)$ with
     $\lc^{i}_{\fa}(M,R/\fp)$ not in $\mathcal M$.
  \item[\rm{(}i\rm{)}] ${\cd}_{(\fa,\mathcal M)}(M,R)=
    \min\{r\geq 0|\lc^{r+1}_{\fa}(M,R/\fp)\in \mathcal M
\text{ for all }\fp\in\Spec(R)\}$.
 \item[\rm{(}j\rm{)}] ${\cd}_{(\fa,\mathcal M)}(M,R)=
    \min\{r\geq 0|\lc^{r+1}_{\fa}(M,N)\in \mathcal M
\text{  for all finite }$$R\text{--modules } N \}$.
\end{enumerate}

\end{prop}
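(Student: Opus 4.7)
The plan is to read each assertion directly off the characterization theorems already proved: Theorem \ref{2-1} governs parts (a)--(f), and its Melkersson refinement Theorem \ref{T:casen+1} governs parts (g)--(j). The common template is to rewrite ``$\cd_{(\fa,\mathcal S)}(M,N) \leq r$'' as ``$\lc^i_\fa(M,N) \in \mathcal S$ for all $i > r$'', i.e.\ as condition (i) of Theorem \ref{2-1} at level $n = \max\{\pd M,\, r\}$, and then apply whichever equivalent condition matches the assertion at hand.

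First I would dispatch the formal parts (a)--(d). Part (a) follows since $\mathcal S_1 \subseteq \mathcal S_2$ shrinks the set over which the supremum is taken, and specializing $\mathcal S_1 = \{0\}$ recovers $\cd_\fa(M,N)$. Part (b) is the implication (i)$\Rightarrow$(ii) of Theorem \ref{2-1} applied at $n = \max\{\pd M,\, \cd_{(\fa,\mathcal S)}(M,N)\}$, with equality under $\Supp_R(L) = \Supp_R(N)$ supplied by Corollary \ref{C:supN=supM}. Part (c) combines (b) (to bound $\cd_{(\fa,\mathcal S)}(M,N')$ and $\cd_{(\fa,\mathcal S)}(M,N'')$ by $\cd_{(\fa,\mathcal S)}(M,N)$, since both supports sit inside $\Supp_R(N)$) with the long exact sequence of generalized local cohomology and extension-closure of $\mathcal S$ for the reverse inequality. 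Part (d) is (b) applied to the pair $(N,R)$, since $\Supp_R(N) \subseteq \Supp_R(R)$, together with attainment at $N = R$.

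For (e) and (f), take $r$ to be the right-hand supremum and apply the equivalences (iii)$\Leftrightarrow$(i) and (iv)$\Leftrightarrow$(i) of Theorem \ref{2-1} at level $n = \max\{\pd M,\, r\}$; the reverse inequalities come from (b) applied to each $R/\fp$ in the indexing set, noting for (f) that every $\fq \in \Supp_R(N)$ contains a minimal prime of $N$. For the Melkersson statements (g)--(j), I would invoke Theorem \ref{T:casen+1}: in (g), letting $r_0$ denote the right-hand minimum, (v)$\Rightarrow$(i) at $n = \max\{\pd M,\, r_0\}$ gives $\cd_{(\fa,\mathcal M)}(M,N) \leq r_0$, while (i)$\Rightarrow$(iii) of Theorem \ref{2-1} at $n = \cd_{(\fa,\mathcal M)}(M,N)$ gives $r_0 \leq \cd_{(\fa,\mathcal M)}(M,N)$. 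Part (h) will follow by contradiction: if at some $i$ in the stated range $\lc^i_\fa(M,R/\fp) \in \mathcal M$ for every $\fp \in \Supp_R(N)$, then (v)$\Rightarrow$(i) of Theorem \ref{T:casen+1} at $n = i-1$ forces $\lc^j_\fa(M,N) \in \mathcal M$ for all $j \geq i$, which is incompatible with $i$ lying at or below $\cd_{(\fa,\mathcal M)}(M,N)$. Part (i) specializes (g) to $N = R$, where $\Supp_R(R) = \Spec(R)$, and (j) combines (d) with (i) to replace the quantification over primes by one over all finite modules.

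The only genuinely delicate point is matching the degree shift in part (h): the range $[1+\pd M,\, \cd_{(\fa,\mathcal M)}(M,N)+\pd M]$ needs to be aligned carefully with the $n+1$ indexing in Theorem \ref{T:casen+1} so that the contradiction actually materializes (and, depending on interpretation, restricts to $[1+\pd M,\, \cd_{(\fa,\mathcal M)}(M,N)]$). Everywhere else the argument is routine bookkeeping with suprema and direct invocation of the characterizations already proven.
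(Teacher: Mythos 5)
Your proposal is correct and follows essentially the same route as the paper, whose proof of this proposition is just a list of pointers to Theorem \ref{2-1}, Corollary \ref{C:supN=supM}, the long exact sequence for (c), and Theorem \ref{T:casen+1} for (g)--(j); you have simply filled in the same citations with the necessary bookkeeping. Your caveat about part (h) is well taken: the contradiction via (v)$\Rightarrow$(i) of Theorem \ref{T:casen+1} only materializes for $i\leq{\cd}_{(\fa,\mathcal M)}(M,N)$, and indeed for $i>{\cd}_{(\fa,\mathcal M)}(M,N)$ the conclusion of (h) fails (by (i)$\Rightarrow$(iii) of Theorem \ref{2-1} every $\lc^{i}_{\fa}(M,R/\fp)$ then lies in $\mathcal M$), so the stated upper end of the range should be ${\cd}_{(\fa,\mathcal M)}(M,N)$ rather than ${\cd}_{(\fa,\mathcal M)}(M,N)+\pd M$ --- an issue with the statement, not with your argument.
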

\begin{proof}
(a) By definition.

(b) Follows from \ref{2-1}.

(c) The inequality "$\geq$", holds by (b) and we get the opposite
inequality from the following exact sequence
$$
\dots\lo \lc^{i}_{\fa}(M,N^{\prime})\lo \lc^{i}_{\fa}(M,N) \lo
\lc^{i}_{\fa}(M,N^{\prime\prime})\lo \dots
$$

The assertions (d), (e) and (f) follow from theorem \ref{2-1}
 $(i)\Leftrightarrow{(ii)}$, $(i)\Leftrightarrow{(ii)}$
and ${(i)}\Leftrightarrow{(iv)}$, respectively.

(g), (h), (i) and (j) follow from \ref{T:casen+1}.
\end{proof}

\bibliographystyle{amsplain}

\end{document}